\documentclass[12pt, a4paper]{article}
\usepackage{lipsum}
\usepackage{authblk}
\usepackage{amsfonts}
\usepackage{amssymb}
\usepackage{amscd}
\usepackage{mathrsfs}
\usepackage{amsmath,amssymb, amsthm}
\usepackage{graphicx}
\usepackage{color}
\usepackage{authblk}
\usepackage{amssymb}
\usepackage{indentfirst,latexsym}
\usepackage[bookmarks=false]{hyperref}
\usepackage{amsmath,tikz}

\newtheorem{theorem}{Theorem}[section]
\newtheorem{corollary}[theorem]{Corollary}
\newtheorem{lemma}[theorem]{Lemma}

\theoremstyle{definition}

\usepackage{enumerate}
\long\def\delete#1{}

\usepackage{xcolor}
\usepackage[normalem]{ulem}

\title{Classification of perfect and total perfect codes in generalized Petersen graphs}
\renewcommand{\thefootnote}{\fnsymbol{footnote}}
\author[a]{Xiaomeng Wang}
\author[b]{Junyang Zhang\footnote{Corresponding author}}
\affil[a]{{\small School of Mathematics and Statistics\\ Lanzhou University\\
 Lanzhou, Gansu 730000\\

  P. R. China}}
\affil[b]{{\small School of Mathematical Sciences~\& Chongqing Key Lab of Cognitive Intelligence and Intelligent Finance\\ Chongqing Normal University\\ Chongqing 401331\\ P. R. China}}

\date{}

\begin{document}

\openup 0.5\jot
\maketitle

\renewcommand{\thefootnote}{\fnsymbol{footnote}}
 \footnotetext{E-mail addresses: wangxiaomeng@lzu.edu.cn (Xiaomeng Wang); jyzhang@cqnu.edu.cn (Junyang Zhang)}

\vspace{-10mm}
\begin{abstract}
In a graph $\Gamma$, a perfect code is an independent set $C$ with the property that every vertex not in $C$ is adjacent to a unique vertex in $C$, and a total perfect code is a set $C$ of vertices of $\Gamma$ such that every vertex of $\Gamma$ is adjacent to a unique vertex in $C$. We classify these codes for generalized Petersen graphs.

\medskip
{\em Keywords:} perfect code; total perfect code; generalized Petersen graph

\medskip
{\em AMS subject classifications (2020):} 05C25, 05C69
\end{abstract}

\section{Introduction}
\label{sec:Intr}

All graphs considered in this paper are finite, undirected and simple.
For a graph $\Gamma$, we denote its vertex set and edge set by $V(\Gamma)$ and $E(\Gamma)$ respectively. Let $\Gamma$ be a graph. For any two subsets $A$ and $B$ of $V(\Gamma)$, we use $E(A,B)$ to denote the set of edges of $\Gamma$ with one end in $A$ and the other in $B$. As a shorthand, $E(A)$ is used to denote $E(A, A)$, the set of edges with both ends in $A$.

\medskip
The \emph{distance} between two vertices in $\Gamma$ is the length of a shortest path connecting them or is defined as $\infty$ if no such path exists. A subset $C$ of $V(\Gamma)$ is called \cite{Big} a \emph{perfect $r$-error-correcting code} in $\Gamma$ if every vertex of $\Gamma$ is at distance at most $r$ to exactly one vertex in $C$.  A perfect $1$-error-correcting code is typically called a {\em perfect code}. Equivalently, $C$ is a perfect code if it is an independent set in $\Gamma$ and every vertex  of $\Gamma$ is either in $C$ or adjacent to exactly
one vertex in $C$. A subset $C$ of $V(\Gamma)$ is said to be a \emph{total perfect code} in $\Gamma$ if every vertex of $\Gamma$ is adjacent to exactly
one vertex in $C$ \cite{Zhou2016}. It follows that a total perfect code in $\Gamma$ induces a matching in $\Gamma$ and therefore must have even cardinality.
In graph theory, a perfect code is also known as an efficient or independent perfect dominating set \cite{DSLW16,Le}, while a total perfect code is referred to as an efficient open dominating set \cite{KPY2014}.

\medskip
The concept of perfect $r$-error-correcting codes in graphs was first introduced by Biggs \cite{Big} as a generalization of the classical concept of \emph{perfect $r$-error-correcting code} from coding theory \cite{HK18,MS77}.
In coding theory, a code $C$ over an alphabet $A$ is a subset of $A^{n}$, where $A^{n}$ denotes the $n$-fold Cartesian product $A\times\cdots\times A$, and its elements are called words of length $n$. The Hamming distance between two words is the number of positions in which they differ. A code
$C$ is a perfect \emph{$r$-error-correcting Hamming code} if every word in
$A^{n}$ is within a Hamming distance of at most
$r$ from exactly one codeword in $C$.
\emph{Perfect $r$-error-correcting Lee code} over the alphabet $\mathbb{Z}_{m}$ is defined in a similar way, where $\mathbb{Z}_{m}$ is the ring of integers modulo $m$ and the Lee distance between two words \begin{equation*}
x=(x_{1},x_{2},\cdots,x_{n})~\mbox{and}~y=(y_{1},y_{2},\cdots,y_{n})\in \mathbb{Z}_{m}^{n}
\end{equation*}
 is defined as follows:
\begin{equation*}
  d_{L}(x,y)=\sum\limits_{i=1}^{n}\min(|x_{i}-y_{i}|,m-|x_{i}-y_{i}|).
\end{equation*}
Recall that the \emph{Hamming graph} $H(n,m)$ is the Cartesian product of $n$ copies of the complete graph $K_m$ and the \emph{grid-like graph}
$L(n, m)$ is the Cartesian product of $n$ copies of the $m$-cycle $C_{m}$. From this perspective,  perfect $r$-error-correcting Hamming codes over an alphabet of size $m$ are precisely the perfect $r$-error-correcting codes in $H(n,m)$. Similarly, for $m\geq3$, perfect $r$-error-correcting Lee codes over $\mathbb{Z}_m$ are precisely the perfect $r$-error-correcting codes in $L(n, m)$.

\medskip
The existence of perfect codes under a given metric is a problem of great significance in coding theory. A complete classification of the parameters for which perfect $r$-error-correcting Hamming codes over Galois fields exist was achieved by the early 1970s \cite{Ti1973,vL1971,Zi1973}. In 1970, Golomb and Welch \cite{GW1970} conjectured that for any $n> 2,r > 1$, and $q\geq 2r+1$ there is no $q$-ary perfect $r$-error-correcting Lee code of length $n$. Despite extensive study, this conjecture remains wide open \cite{HK18,LZ2020}.
In \cite{Big}, Biggs showed that the natural setting for the problem of perfect codes is the class of distance-transitive graphs. Since then, considerable research has been devoted to perfect codes in these graphs and, more generally, in distance-regular graphs \cite{B1977,C1987,E1996,HS1975,MZ1995,SE2020}. More recently, perfect codes in other kinds of graphs have also attracted significant attention \cite{M2011,WX2021,WZ2023,Z2024,Z15,Z2008}, alongside the fascinating related concept of total perfect codes \cite{GHT2008,KG2006,KPY2014,Zhou2016}.

\medskip
Current research on perfect and total perfect codes in graph theory primarily follows two approaches: one determines when subgroups constitute such codes in Cayley graphs \cite{CWX2020,HXZ18,MWWZ20,Z2023,ZZ2021},  and the other establishes existence conditions for prominent graph classes \cite{EJM2009,FHKL2025,FHZ2017,KLC2020}. Despite these efforts, a comprehensive classification for specific graph families remains scarce in the literature \cite{KLS2022}.

\medskip
Given an integer $n\geq 3$ and a nonzero element $k\in \mathbb{Z}_n$, the {\em generalized Petersen graph} $\mathrm{GP}(n,k)$ is defined on the set $\{u_i,v_i\mid i\in \mathbb{Z}_n\}$ of $2n$ vertices, with the adjacencies given by $u_{i}\sim u_{i+1}$, $u_{i}\sim v_{i}$, $v_{i}\sim v_{i+k}$ for all $i\in \mathbb{Z}_n$. It was proved in \cite{EJM2009} that $\mathrm{GP}(n,k)$ admits a perfect code if and only if $n\equiv 0\pmod4$ and $k\equiv 1\pmod2$. In this paper, we classify perfect and total perfect codes in $\mathrm{GP}(n,k)$. We mainly obtain the following two theorems.
\begin{theorem}
\label{pcode}
Let $C$ be a subset of the vertex set of $\mathrm{GP}(n,k)$. Then $C$ is a perfect code in $\mathrm{GP}(n,k)$ if and only if $n\equiv0\pmod{4}$, $k\equiv1\pmod{2}$ and
$C=\{u_{4i+j},v_{4i+j+2}\mid i\in \mathbb{Z}_n\}$
 for some $j\in\{0,1,2,3\}$.
\end{theorem}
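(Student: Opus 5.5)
The plan is to prove sufficiency by direct verification and necessity by a counting argument followed by local structural analysis. Throughout, write $U=C\cap\{u_i\}$ and $V=C\cap\{v_i\}$, with $a=|U|$ and $b=|V|$.

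\emph{Sufficiency.} Assume $4\mid n$ and $k$ odd, so that residues of indices modulo $4$ are well defined in $\mathbb{Z}_n$. I will check that the closed neighbourhoods of the elements of $C=\{u_{4i+j},v_{4i+j+2}\}$ partition $V(\mathrm{GP}(n,k))$.
\begin{itemize}
\item The vertex $u_{4i+j}$ covers $u_{4i+j-1}$, $u_{4i+j}$, $u_{4i+j+1}$ and $v_{4i+j}$.
\item The vertex $v_{4i+j+2}$ covers $u_{4i+j+2}$, $v_{4i+j+2}$ and $v_{4i+j+2\pm k}$.
\end{itemize}
Since $k$ is odd, $\{\pm k\}\equiv\{1,3\}\pmod 4$. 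Hence the inner vertices covered by the second family have indices in the residue classes $j+1$, $j+2$ and $j+3$ modulo $4$, each exactly once. Together with the outer coverage, this shows every vertex is covered exactly once. Independence of $C$ follows from the same residue computation, since no two elements of $C$ are adjacent.

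\emph{Counting step for necessity.} Since $\mathrm{GP}(n,k)$ is cubic, we have $4(a+b)=2n$. Because closed neighbourhoods of elements of $C$ are disjoint, the elements of $U$ lie at cycle-distance at least $3$ on the outer $n$-cycle, so $U$ dominates exactly $3a$ outer vertices. The remaining outer vertices are exactly the $u_i$ with $v_i\in V$, which gives $n=3a+b$. The same argument on the inner graph (a union of cycles $v_i\sim v_{i+k}$) gives $n=3b+a$. Hence $a=b=n/4$, so $4\mid n$; the condition that $k$ is odd is then taken from \cite{EJM2009}, or rederived below.

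\emph{Structural step.} Let the consecutive elements of $U$ along the outer cycle be separated by gaps $d_1,\dots,d_a\ge 3$ with $\sum d_t=n$. The $d_t-3$ interior vertices of the $t$-th gap that are not dominated by $U$ are exactly the $u_i$ with $v_i\in V$. Therefore $\sum(d_t-3)=b=a$, so the average gap is $4$. The goal is to show every $d_t=4$, i.e.\ the index pattern around the outer cycle is $u$-code, empty, $v$-code, empty, repeated. This forces $C$ to have the stated form for some $j\in\{0,1,2,3\}$. Once $C$ has that form, the requirement that $v_{4i+j+2\pm k}$ land in the classes $j+1$ and $j+3$ forces $k$ to be odd.

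To rule out a gap with $d\ne4$, I would first rule out $d=3$. If $u_i,u_{i+3}\in U$, then $u_{i+1},u_{i+2}$ are dominated by $U$, and $v_{i+1},v_{i+2}$ must each be dominated by an inner neighbour in $V$. I would then chase the constraints through the inner edges $v_{i+1}\sim v_{i+1\pm k}$. This means using that any $v_m\in V$ has $u_m$ outer-undominated, so that $m$ lies strictly inside a gap of length at least $4$, at distance at least $2$ from its ends. From there I would aim to derive a double domination or an undominated vertex. Once $d=3$ is excluded, the averaging gives $d_t\ge4$ with mean $4$, so all $d_t=4$.

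I expect this local exclusion of a gap of length $3$ to be the main obstacle, since it intertwines the outer cycle with the step-$k$ inner cycles. If a direct chase is too messy, I would instead run the symmetric gap analysis on the inner cycles and combine the two averaging identities to force both patterns to be rigid simultaneously.
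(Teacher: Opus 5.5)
\textbf{Verdict: genuine gap in the rigidity step.} Your sufficiency check, the count $a=b=n/4$, the gap bookkeeping (every $d_t\ge 3$ with mean exactly $4$), and the final deduction of the shape of $C$ and of $k$ odd all match the paper's Steps 1, 3 and 4. What is missing is the heart of necessity: you never prove that every $d_t=4$.

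You propose to exclude $d_t=3$, but you give only the intention to chase constraints, and you flag this yourself as the main obstacle. Your fallback, running the gap analysis on the inner cycles, yields only another mean-$4$ identity. No combination of counting identities rules out a local defect, so some local argument is still required. Excluding $d_t=3$ directly is also the hard direction. From $u_i,u_{i+3}\in C$ you learn only that one of $v_{i+1\pm k}$ and one of $v_{i+2\pm k}$ lie in $C$. This branches, and closing every branch means working out to indices near $i\pm 2k$. It also needs auxiliary facts, for instance that $u_m$ and $u_{m+k}$ cannot both lie in $C$, and that no four consecutive outer vertices avoid $C$. You establish none of these.

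\textbf{The missing idea: use the averaging in the other direction.} Since every $d_t\ge 3$ and the mean is exactly $4$, some $d_t\neq 4$ exists if and only if some $d_t\ge 5$ exists. That means four consecutive outer vertices $u_\ell,\ldots,u_{\ell+3}\notin C$, and this configuration dies in a few lines (the paper's Step 2):
\begin{enumerate}[(1)]
\item The vertices $u_{\ell+1}$ and $u_{\ell+2}$ can only be dominated by $v_{\ell+1}$ and $v_{\ell+2}$, so both of these lie in $C$.
\item Hence $v_{\ell+1\pm k}$ and $v_{\ell+2\pm k}$ are outside $C$ and already dominated. This forces $u_{\ell+1\pm k},u_{\ell+2\pm k}\notin C$.
\item Now $u_{\ell+1\pm k}$ has its neighbours $u_{\ell+2\pm k}$ and $v_{\ell+1\pm k}$ outside $C$, so $u_{\ell\pm k}\in C$.
\item Then $v_{\ell\pm k}\notin C$. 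Also $v_\ell\notin C$, since otherwise $v_{\ell+k}$ would be dominated by both $v_\ell$ and $u_{\ell+k}$.
\item As $u_\ell\notin C$ too, $v_\ell$ is undominated, a contradiction.
\end{enumerate}
If you substitute this for your $d_t=3$ plan, the rest of your argument goes through as written.
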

\begin{theorem}
\label{tpcode}
Let $C$ be a subset of the vertex set of $\mathrm{GP}(n,k)$. Then $C$ is a total perfect code in $\mathrm{GP}(n,k)$ if and only if one of the following two statements holds:
\begin{enumerate}[{\rm(i)}]
  \item $n\equiv0\pmod{3}$, $k\not\equiv0\pmod{3}$ and $C=C_j$ for some $j\in\{0,1,2\}$ where
\begin{equation*}
C_j:=\{u_{3i+j},v_{3i+j}\mid i\in \mathbb{Z}_n\};
\end{equation*}
  \item $n\equiv0\pmod{6}$, $k\equiv\pm1\pmod{6}$ and $C=C'_j$ for some $j\in\{0,1,2,3,4,5\}$ where
\begin{equation*}
C'_j=\{u_{6i+j},u_{6i+j+1},v_{6i+j+3},v_{6i+j+4}\mid i\in \mathbb{Z}_n\}.
\end{equation*}
\end{enumerate}
\end{theorem}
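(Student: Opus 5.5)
The plan is to encode $C$ by two $0/1$ sequences on $\mathbb{Z}_n$ and reduce the total-perfect condition to local equations. Put $a_i=1$ if $u_i\in C$ and $b_i=1$ if $v_i\in C$. Since $\mathrm{GP}(n,k)$ is cubic, $C$ is a total perfect code if and only if, for all $i\in\mathbb{Z}_n$,
\begin{equation*}
a_{i-1}+a_{i+1}+b_i=1 \quad\text{and}\quad b_{i-k}+b_{i+k}+a_i=1 .
\end{equation*}
Double counting edges between $C$ and $V$ (each vertex has exactly one neighbour in $C$, each vertex of $C$ has three neighbours) gives $3|C|=2n$, so $3\mid n$ is forced at once. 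The first equation determines $b$ from $a$: $b_i=1-a_{i-1}-a_{i+1}$. This needs $a_{i-1}a_{i+1}=0$ for every $i$, i.e.\ the pattern $1\,{*}\,1$ never occurs. Substituting into the second equation gives a single condition on $a$:
\begin{equation*}
a_{i-k-1}+a_{i-k+1}+a_{i+k-1}+a_{i+k+1}=1+a_i \qquad (\ast)
\end{equation*}
for all $i$. So the whole problem is to classify binary sequences $a$ on $\mathbb{Z}_n$ with no $1\,{*}\,1$ that satisfy $(\ast)$.

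For sufficiency, I will substitute directly. For $C_j$ we have $a_i=1$ iff $i\equiv j\pmod 3$, which needs $3\mid n$. The sum in $(\ast)$ counts how many of $i\pm k\pm1$ are $\equiv j\pmod 3$. When $k\not\equiv0\pmod 3$ this count is $2$ if $i\equiv j$ and $1$ otherwise, which matches $(\ast)$; when $k\equiv0$ it fails. For $C'_j$ we have $a_i=1$ iff $i-j\in\{0,1\}\pmod 6$, and a check on the six residues of $i$ shows $(\ast)$ holds exactly when $k\equiv\pm1\pmod 6$.

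For necessity, I first use the no-$1\,{*}\,1$ rule to see that the $1$'s of $a$ come in blocks of length $1$ or $2$, separated by gaps of $0$'s. A $1$ followed by a gap of length $1$ is impossible, since that produces $1\,0\,1$. The sequence cannot be identically $0$: then $(\ast)$ reads $0=1$. Hence $a$ is a cyclic word in the blocks $1$, $11$, each followed by a gap $0^g$ with $g\ge2$. I then classify these words through $(\ast)$ in two cases.

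\emph{Case 1: some block $11$ occurs.} In $C$ this is an outer edge $u_iu_{i+1}$. I would show that the gap after it has length exactly $4$ and that the next block is again $11$, which forces the $6$-periodic pattern $110000$, i.e.\ $C=C'_j$. The tool is to apply $(\ast)$ at the indices $i$, $i+1$, $i\pm2$, and so on.

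\emph{Case 2: only single $1$'s occur.} Here I would show that every gap has length exactly $2$, giving $100$ repeated, i.e.\ $C=C_j$.

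In both cases, once the pattern is fixed, the residue conditions on $k$ (mod $3$, resp.\ mod $6$) follow from the sufficiency computation, run in reverse.

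The main obstacle is that $(\ast)$ is nonlocal: it links position $i$ with positions $i\pm k\pm1$, so gap lengths cannot be read off from a purely local scan. My first attempt is a counting or averaging argument. Summing $(\ast)$ over all $i$ gives $4|A|=n+|A|$, where $A=\{i:a_i=1\}$, so $|A|=n/3$. Then $|\{i:b_i=1\}|=n-2|A|=n/3$ as well. With blocks $1$/$11$ and gaps at least $2$, the density $1/3$ constrains the average gap length. I would combine this with the fact that $(\ast)$ bounds its left side by $2$, so $a_i=1$ forces two of the four shifted positions to be $1$. This makes $i\mapsto i+k$ (and $i-k$) map the set of $1$-positions, up to $\pm1$, into itself. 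From this I aim to show that all gaps are equal: the shift by $k$ must carry each block-and-gap configuration onto an identical one. If that fails, the fallback is to work with the inner cycles $v_i\sim v_{i+k}$ instead. The induced matching of $C$ restricted to each inner cycle has an analogous local structure, which I would then propagate back to the outer cycle.
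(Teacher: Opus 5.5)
Your reduction to the no-$1{*}1$ rule plus $(\ast)$ is correct, and so are the sufficiency checks. Your case with only single $1$'s is in fact already finished by your own count $|A|=n/3$: the $n/3$ singletons are followed by gaps of total length $2n/3$, each of length at least $2$, so every gap equals $2$. This is a neat alternative to the paper's local propagation $u_\ell,v_\ell\in C\Rightarrow u_{\ell+3},v_{\ell+3}\in C$.

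The genuine gap is your Case~1, where a block $11$ occurs, i.e.\ $C$ contains an outer edge $u_iu_{i+1}$. This case is the real content of the necessity direction, and there you only state intentions. The tools you name do not close it:
\begin{enumerate}[(a)]
\item Counting plus the no-$1{*}1$ rule cannot settle it. Words such as $(100\,110000)^m$ or $(11\,00\,11\,000000)^m$ have density $1/3$, blocks of length $\le 2$ and gaps $\ge 2$, yet they are not of the form $C'_j$.
\item A scan of $(\ast)$ at $i,i+1,i\pm2$ does not close up, because each instance of $(\ast)$ brings in the unknown values at $i\pm k\pm1$.
\item Your remark that $a_i=1$ puts one of $i+k\pm1$ and one of $i-k\pm1$ into $A$ discards the exact information at positions with $a_i=0$. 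You give no mechanism by which it would force equal gaps.
\end{enumerate}

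What is missing is an argument of the strength of the paper's Lemma~\ref{nuu}. Start from $u_0,u_1\in C$ and normalize $v_{2+k}\in C$ using $\mathrm{GP}(n,k)=\mathrm{GP}(n,-k)$. Then split according to whether $u_{2+k}$ or $v_{2+2k}$ lies in $C$. In each branch, chase forced memberships through positions such as $v_{2k}$, $u_{3+2k}$, $u_{3+3k}$, $v_{4-k}$, $u_{5+k}$, $u_{6-k}$ (shifts by up to $3k$ and back) until you reach $u_6,u_7\in C$.

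Iterating yields $a_{6i}=a_{6i+1}=1$ for all $i$. Your identity $|A|=n/3$, which is exactly $2|E(C)\cap E(U)|+|E(C)\cap E(U,V)|=n/3$, then forces $6\mid n$ and $A=\{6i+j,\,6i+j+1\mid i\in\mathbb{Z}_n\}$, hence $C=C'_j$. After that, your reverse sufficiency computation gives $k\equiv\pm1\pmod 6$.

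In your sequence language, the missing step is to prove $a_0=a_1=1\Rightarrow a_6=a_7=1$ from $(\ast)$ and the no-$1{*}1$ rule. Until you supply such a propagation argument, the necessity direction is unproved.
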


\medskip
The remainder of this paper is structured as follows. In Section \ref{sec:cons}, we construct perfect and total perfect codes in the graph $\mathrm{GP}(n,k)$. The proofs of Theorems \ref{pcode} and \ref{tpcode} are then presented in Sections \ref{sec:pc} and \ref{sec:tpc}, respectively.
\section{Constructions}
\label{sec:cons}
Let $U=\{u_1,u_2,\ldots,u_n\}$ and $V=\{v_1,v_2,\ldots,v_n\}$
for the remainder of the paper, so the vertex set of $\mathrm{GP}(n,k)$
is a disjoint union of $U$ and $V$. In the graph $\mathrm{GP}(n,k)$, $U$ induces an $n$-cycle, $V$ induces an $n$-cycle or disjoint cycles of the same length, and the edge set
$E(U,V)$ is a perfect matching.

\medskip
The following lemma presents constructions of perfect codes in $\mathrm{GP}(n,k)$. The proof is straightforward and thus omitted. A proof for the case $j=1$ can be found in \cite[Lemma 2]{EJM2009}.
\begin{lemma}
\label{epc}
If $n\equiv0\pmod{4}$ and $k\equiv1\pmod{2}$, then for any $j\in\{0,1,2,3\}$, the set
\begin{equation*}
\{u_{4i+j},v_{4i+j+2}\mid i\in \mathbb{Z}_n\}
\end{equation*}
is a perfect code in $\mathrm{GP}(n,k)$.
\end{lemma}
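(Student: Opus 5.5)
The plan is to verify the definition of a perfect code directly. Fix $j\in\{0,1,2,3\}$ and set $C=\{u_{4i+j},v_{4i+j+2}\mid i\in\mathbb{Z}_n\}$. Since $4\mid n$, the indices $4i+j$ are read modulo $n$ and are well defined modulo $4$. So $C\cap U=\{u_m\mid m\equiv j\pmod 4\}$ and $C\cap V=\{v_m\mid m\equiv j+2\pmod 4\}$, and $|C|=n/4+n/4=n/2$. By the translation symmetry $u_m\mapsto u_{m+1}$, $v_m\mapsto v_{m+1}$ of $\mathrm{GP}(n,k)$, it suffices to treat $j=0$; this case is also \cite[Lemma 2]{EJM2009} up to shift.

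First I will show that $C$ is independent, checking the three edge types in turn.
\begin{itemize}
\item A rim edge $u_mu_{m+1}$ joins indices differing by $1$, so its ends cannot both be $\equiv 0 \pmod 4$.
\item A spoke $u_mv_m$ would need $m\equiv0$ and $m\equiv2\pmod 4$ simultaneously, which is impossible.
\item An inner edge $v_mv_{m+k}$ joins indices differing by the odd number $k$. Since $n$ is even, this difference stays odd modulo $n$, hence nonzero modulo $4$, so both ends cannot lie in the class $2\pmod 4$.
\end{itemize}

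Next I will show that every vertex outside $C$ has exactly one neighbour in $C$. Since the graph is $3$-regular and $|C|=n/2$, the closed neighbourhoods of vertices of $C$ have total size $4\cdot n/2=2n=|V(\mathrm{GP}(n,k))|$. It therefore suffices to prove domination, that is, that every vertex is in $C$ or adjacent to some vertex of $C$; exact uniqueness then follows by counting. Domination is checked by residue classes modulo $4$.
\begin{itemize}
\item $u_m$ with $m\equiv 1$ or $3$ has a rim neighbour $u_{m\mp1}$ with index $\equiv 0$.
\item $u_m$ with $m\equiv2$ is adjacent via its spoke to $v_m\in C$.
\item $v_m$ with $m\equiv 0$ is adjacent via its spoke to $u_m\in C$.
\item $v_m$ with $m\equiv 1$ or $3$ has inner neighbours $v_{m\pm k}$. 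Since $k$ is odd, one of $m+k$ and $m-k$ is $\equiv 2\pmod 4$, because $m\pm k$ are even and differ by $2k\equiv 2\pmod 4$.
\end{itemize}

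The only delicate point is that last case together with the well-definedness of residues modulo $4$ on $\mathbb{Z}_n$, which uses $4\mid n$. No real obstacle is expected.
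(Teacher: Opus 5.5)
Your proof is correct, and it is exactly the kind of direct verification the paper has in mind when it omits the proof as straightforward (it points to the literature for $j=1$). The one variation is how you get uniqueness of the $C$-neighbour: you combine domination with the count $4\cdot\frac{n}{2}=2n$, whereas the paper's analogous Lemmas~\ref{etpc1} and~\ref{etpc2} name the unique neighbour case by case. Your count is valid because $\mathrm{GP}(n,k)$ is genuinely $3$-regular under the hypotheses: $k$ is odd while $\frac{n}{2}$ is even, so $2k\not\equiv 0\pmod n$ and $v_{m+k}\neq v_{m-k}$. You should state this explicitly.
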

The following two lemmas give constructions of total perfect codes in $\mathrm{GP}(n,k)$.
\begin{lemma}
\label{etpc1}
If $n\equiv0\pmod{3}$ and $k\not\equiv0\pmod{3}$, then for every $j\in\{0,1,2\}$, the set
\begin{equation*}
C_j:=\{u_{3i+j},v_{3i+j}\mid i\in \mathbb{Z}_n\}
\end{equation*}
is a total perfect code in $\mathrm{GP}(n,k)$.
\end{lemma}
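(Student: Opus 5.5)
The plan is a direct verification: we check that every vertex of $\mathrm{GP}(n,k)$ has exactly one neighbour in $C_j$. Since $3\mid n$, the residue of an index modulo $3$ is well defined on $\mathbb{Z}_n$. Hence $C_j$ is exactly the set of vertices $u_t,v_t$ with $t\equiv j\pmod 3$. This makes the check purely arithmetic on residues modulo $3$.

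First take a vertex $u_t$. Its neighbours are $u_{t-1}$, $u_{t+1}$ and $v_t$. If $t\equiv j\pmod 3$, then $v_t\in C_j$. In this case $t\pm1\not\equiv j$, so the two neighbours $u_{t\pm1}$ are not in $C_j$, and $u_t$ has exactly one neighbour in $C_j$. If $t\not\equiv j$, then $v_t\notin C_j$. Exactly one of $t-1$ and $t+1$ is congruent to $j$ modulo $3$, because $t-1,t,t+1$ run over all three residues. So again there is exactly one neighbour in $C_j$.

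Next take a vertex $v_t$. Its neighbours are $v_{t-k}$, $v_{t+k}$ and $u_t$. Because $k\not\equiv0\pmod 3$, the indices $t-k,t,t+k$ also run over all three residues modulo $3$. The same two-case argument therefore applies verbatim with $k$ in place of $1$. The only point needing care is the degenerate case $2k\equiv0\pmod n$, where $v_{t+k}=v_{t-k}$ and the graph is not cubic. This cannot happen here: $2k\equiv0\pmod n$ together with $3\mid n$ would force $3\mid 2k$, hence $3\mid k$, contradicting the hypothesis.

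In fact the hypothesis ensures $t\pm1\neq t$ and $t\pm k\neq t$ with pairwise distinct residues, so no coincidences occur. I expect no real obstacle beyond noting that reduction modulo $3$ is well defined on $\mathbb{Z}_n$.
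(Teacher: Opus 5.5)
Your proposal is correct and takes essentially the same approach as the paper: a direct residue-mod-$3$ check that every vertex has exactly one neighbour in $C_j$, using $k\not\equiv0\pmod 3$ so that $t-k,t,t+k$ cover all three residues. The paper splits the same check into ``$C_j$ induces a $1$-regular subgraph'' and ``every vertex outside $C_j$ has a unique neighbour in $C_j$,'' and your extra remarks on why residues modulo $3$ are well defined and why $2k\equiv0\pmod n$ cannot occur are harmless additions.
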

\begin{proof}

\medskip
Observe that $C_j$ is a disjoint union of two subsets
\begin{equation*}
\{u_{3i+j}\mid i\in \mathbb{Z}_n\}~\mbox{and}~\{v_{3i+j}\mid i\in \mathbb{Z}_n\}.
\end{equation*}
Clearly, $\{u_{3i+j}\mid i\in \mathbb{Z}_n\}$ is an independent set. The set $\{u_{3i+j}\mid i\in \mathbb{Z}_n\}$ is also independent because $k\not\equiv0\pmod{3}$.
Since $u_{3i+j}$ is adjacent to $v_{3\ell+j}$ if and only if $3i+j=3\ell+j$, it follows that the set $C_j$ induces a $1$-regular subgraph of $\mathrm{GP}(n,k)$.

\medskip
Now, consider an arbitrary vertex $u_{\ell}\in U\setminus C_j$. Then  $\ell\not\equiv j \pmod{3}$, which implies $\ell\equiv j+1 \pmod{3}$ or $\ell\equiv j-1 \pmod{3}$. For the former, $u_{\ell-1}$ is the unique element in $C_j$ adjacent to $u_{\ell}$. For the latter, $u_{\ell+1}$ is the unique element in $C_j$ adjacent to $u_{\ell}$.

\medskip
Similarly, for any $v_{\ell}\in V\setminus C_j$, we also have $\ell\equiv j+1 \pmod{3}$ or $\ell\equiv j-1 \pmod{3}$. Given that $k\not\equiv0\pmod{3}$,  exactly one the two congruences $\ell+k\equiv j \pmod{3}$ and  $\ell-k\equiv j \pmod{3}$ holds.  Let
\begin{equation*}
m=\left\{
\begin{array}{ll}
\ell+k, & \hbox{if}~\ell+k\equiv j \pmod{3}; \\
\ell-k, & \hbox{if}~\ell-k\equiv j \pmod{3}. \\
                    \end{array}
                  \right.
\end{equation*}
Then $v_{m}$ is the unique element in $C_j$ adjacent to $v_{\ell}$.

\medskip
Having proved that $C_j$ induces a $1$-regular subgraph of $\mathrm{GP}(n,k)$ and every element not in $C_j$ is adjacent to a unique element in $C_j$, we conclude that $C_j$ is a total perfect code in $\mathrm{GP}(n,k)$.
\end{proof}
\begin{lemma}
\label{etpc2}
If $n\equiv0\pmod{6}$ and $k\equiv\pm1\pmod{6}$, then for every $j\in\{0,1,2,3,4,5\}$, the set
\begin{equation*}
C'_j=\{u_{6i+j},u_{6i+j+1},v_{6i+j+3},v_{6i+j+4}\mid i\in \mathbb{Z}_n\}
\end{equation*}
is a total perfect code in $\mathrm{GP}(n,k)$.
\end{lemma}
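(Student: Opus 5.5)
We verify directly that $C'_j$ induces a $1$-regular subgraph and that every vertex outside $C'_j$ has exactly one neighbour in $C'_j$. The whole argument reduces to residues modulo $6$, which is legitimate because $6\mid n$. We may take $j=0$, since the rotation $u_i\mapsto u_{i+1}$, $v_i\mapsto v_{i+1}$ is an automorphism of $\mathrm{GP}(n,k)$ carrying $C'_j$ to $C'_{j+1}$. Then $C'_0\cap U=\{u_\ell\mid \ell\equiv 0,1 \pmod 6\}$ and $C'_0\cap V=\{v_\ell\mid \ell\equiv 3,4\pmod 6\}$.

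\textbf{The rim $U$.} Each vertex $u_\ell$ has two neighbours $u_{\ell\pm1}$ in $U$ and one neighbour $v_\ell$ in $V$. Since $v_\ell\in C'_0$ only when $\ell\equiv 3,4$, we count by residue.
\begin{itemize}
\item $\ell\equiv0$: the only $U$-neighbour in $C'_0$ is $u_{\ell+1}$, and $v_\ell\notin C'_0$.
\item $\ell\equiv1$: the only $U$-neighbour in $C'_0$ is $u_{\ell-1}$, and $v_\ell\notin C'_0$.
\item $\ell\equiv2$: the $U$-neighbours are $u_{\ell-1}$ (in $C'_0$) and $u_{\ell+1}$ (not in $C'_0$), and $v_\ell\notin C'_0$, so the count is one.
\item $\ell\equiv5$: symmetrically, the only neighbour in $C'_0$ is $u_{\ell+1}$.
\item $\ell\equiv3,4$: neither $U$-neighbour lies in $C'_0$, and $v_\ell\in C'_0$.
\end{itemize}
In every case the count is exactly one.

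\textbf{The spokes $V$.} The neighbours of $v_\ell$ are $u_\ell$ and $v_{\ell\pm k}$. Because $k\equiv\pm1\pmod 6$, the residues of $\ell\pm k$ modulo $6$ are $\{\ell-1,\ell+1\}$ in some order. So the count is the same as on a $6$-cycle pattern, now with $V$-membership at residues $3,4$ and the spoke contributing exactly when $\ell\equiv0,1$.
\begin{itemize}
\item $\ell\equiv3$: of $\{2,4\}$ only $4$ qualifies.
\item $\ell\equiv4$: of $\{3,5\}$ only $3$ qualifies.
\item $\ell\equiv2$: of $\{1,3\}$ only $3$ qualifies, and $u_2\notin C'_0$.
\item $\ell\equiv5$: of $\{4,0\}$ only $4$ qualifies, and $u_5\notin C'_0$.
\item $\ell\equiv0,1$: the residues $\{5,1\}$ and $\{0,2\}$ give no $V$-neighbour in $C'_0$, and the spoke $u_\ell\in C'_0$ supplies the single neighbour.
\end{itemize}

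This case table also shows that every vertex of $C'_0$ has exactly one neighbour in $C'_0$, so $C'_0$ induces a perfect matching on itself, and the total perfect code property holds.

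The only delicate point is the $V$-side reduction. We must check that the residue of $\ell\pm k$ modulo $6$ is well defined on $\mathbb{Z}_n$, which holds since $6\mid n$. We must also note that $\ell+k$ and $\ell-k$ are distinct vertices: for $n\ge 6$, $2k\not\equiv0\pmod n$, because $2k$ is $\pm2$ modulo $6$ and so is not a multiple of $6$. Apart from this, the proof is the routine case table above, which we present in the same style as the proof of Lemma~\ref{etpc1}.
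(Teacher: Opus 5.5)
Your proof is correct and is essentially the paper's argument: a direct residue-mod-$6$ check that every vertex has exactly one neighbour in $C'_j$. The paper splits this into showing that $C'_j$ induces a $1$-regular subgraph and then naming the unique code neighbour of each vertex outside $C'_j$. Your reduction to $j=0$ via the rotation automorphism, and your observation that $\{\ell+k,\ell-k\}\equiv\{\ell-1,\ell+1\}\pmod 6$, are small streamlinings: the latter handles $k\equiv1$ and $k\equiv-1$ at once, whereas the paper treats only $k\equiv1$ and calls the other case similar.
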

\begin{proof}
We only prove the case $k\equiv 1\pmod{6}$, as the case $k\equiv-1\pmod{6}$ can be handled similarly.

\medskip
Set $U_j=\{u_{6i+j},u_{6i+j+1}\mid i\in \mathbb{Z}_n\}$ and $V_j=\{v_{6i+j+3},v_{6i+j+4}\mid i\in \mathbb{Z}_n\}$. Then $C'_j$ is a disjoint union of $U_j$ and $V_j$. By the definition of $\mathrm{GP}(n,k)$, the set $U_j$ induces a $1$-regular subgraph, and there are no edges between $U_j$ and $V_j$.
Since $k\equiv 1\pmod{6}$, we have $v_{6i+j+3+k}, v_{6i+j+4-k}\in C'_j$ but $v_{6i+j+3-k}, v_{6i+j+4+k}\notin C'_j$.
It follows that $V_j$ also induces a $1$-regular subgraph of $\mathrm{GP}(n,k)$.
Therefore $C'_j$ induces a $1$-regular subgraph of $\mathrm{GP}(n,k)$.

\medskip
To complete the proof, it remains to show that every vertex not in $C'_j$ is adjacent to exactly one vertex in $C'_j$.

\medskip
First consider an arbitrary vertex $u_{\ell}\in U\setminus C'_j$. Then  $\ell\not\equiv j,~j+1 \pmod{6}$. Set
\begin{equation*}
w=\left\{
\begin{array}{ll}
u_{\ell-1}, & \hbox{if}~\ell\equiv j+2 \pmod{6}; \\
v_{\ell}, & \hbox{if}~\ell\equiv j+3~\hbox{or}~j+4 \pmod{6}; \\
u_{\ell+1}, & \hbox{if}~\ell\equiv j+5 \pmod{6}. \\
                    \end{array}
                  \right.
\end{equation*}
Then $w$ is the unique vertex in $C'_j$ adjacent to $u_{\ell}$.

\medskip
Now consider an arbitrary vertex $v_{\ell}\in V\setminus C'_j$. Then $\ell\not\equiv j+3,~j+4 \pmod{6}$. Set
\begin{equation*}
w=\left\{
\begin{array}{ll}
u_{\ell}, & \hbox{if}~\ell\equiv j~\hbox{or}~j+1 \pmod{6}; \\
v_{\ell+k}, & \hbox{if}~\ell\equiv j+2 \pmod{6}; \\
v_{\ell-k}, & \hbox{if}~\ell\equiv j+5 \pmod{6}. \\
                    \end{array}
                  \right.
\end{equation*}
Then $w$ is the unique vertex in $C'_j$ adjacent to $v_{\ell}$.
\end{proof}
\section{Proof of Theorem \ref{pcode}}
\label{sec:pc}
\begin{proof}[Proof of Theorem \ref{pcode}]
By Lemma \ref{epc}, for any $j\in\{0,1,2,3\}$, the set
\begin{equation*}
C:=\{u_{4i+j},v_{4i+j+2}\mid i\in \mathbb{Z}_n\}
\end{equation*}
is a perfect code in $\mathrm{GP}(n,k)$. So the sufficiency holds.

\medskip
We now prove the necessity.  Let $C$ be a perfect code in $\mathrm{GP}(n,k)$.  The proof proceeds in four steps.

\medskip
\textsf{Step 1:} Show that $|C\cap U|=|C\cap V|=\frac{n}{4}$.

\medskip
Since $C$ is a perfect code in the $3$-regular graph $\mathrm{GP}(n,k)$, every vertex in $(U\cup V)\setminus C$ is adjacent exactly one vertex in $C$ and every vertex in $C$ is adjacent to exactly three vertices in $V(\mathrm{GP}(n,k))\setminus C$.
Set $s=|C\cap U|$ and $t=|C\cap V|$. Then $|U\setminus C|=n-s$ and $|V\setminus C|=n-t$.

\medskip
Consider the edges between $C$ and $U\setminus C$.
Observe that every vertex in $C\cap U$ is adjacent to exactly $2$ vertices in $U\setminus C$ and every vertex in $C\cap V$ is adjacent to exactly one vertex in $U\setminus C$. It follows that $|E(C,U\setminus C)|=2s+t$. Since every vertex in $U\setminus C$ is adjacent exactly one vertex in $C$, we get $|E(C,U\setminus C)|=n-s$. Thus $n-s=2s+t$, or equivalently, $3s+t=n$.

\medskip
By a symmetric argument considering the edges between
$C$ and $V\setminus C$, we obtain $s+3t=n$.
Solving the two equations $3s+t=n$ and $s+3t=n$, we get $s=t=\frac{n}{4}$.
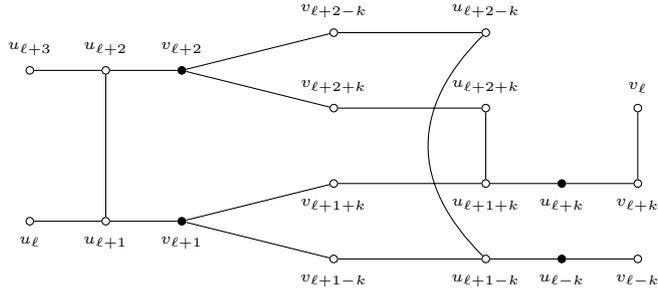
\begin{figure}[t]
  \centering
\begin{tikzpicture}
\tikzstyle{main node}=[draw,shape=circle,label distance=-0.5mm,inner sep=1pt];
\node[main node] (0) at (0,0) [label=below:\tiny{$u_{\ell}$}] {};
\node[main node] (1) at (1,0) [label=below:\tiny{$u_{\ell+1}$}] {};
\node[main node] (2) at (1,2) [label=above:\tiny{$u_{\ell+2}$}] {};
\node[main node] (3) at (0,2) [label=above:\tiny{$u_{\ell+3}$}] {};
\node[main node] (v1) at (2,0) [fill,label=below:\tiny{$v_{\ell+1}$}] {};
\node[main node] (v2) at (2,2) [fill,label=above:\tiny{$v_{\ell+2}$}] {};
\node[main node] (2+k) at (4,1.5) [label=above:\tiny{$v_{\ell+2+k}$}] {};
\node[main node] (2-k) at (4,2.5) [label=above:\tiny{$v_{\ell+2-k}$}] {};
\node[main node] (1+k) at (4,0.5) [label=below:\tiny{$v_{\ell+1+k}$}] {};
\node[main node] (1-k) at (4,-0.5) [label=below:\tiny{$v_{\ell+1-k}$}] {};
\node[main node] (u2+k) at (6,1.5) [label=above:\tiny{$u_{\ell+2+k}$}] {};
\node[main node] (u2-k) at (6,2.5) [label=above:\tiny{$u_{\ell+2-k}$}] {};
\node[main node] (u1+k) at (6,0.5) [label=below:\tiny{$u_{\ell+1+k}$}] {};
\node[main node] (u1-k) at (6,-0.5) [label=below:\tiny{$u_{\ell+1-k}$}] {};
\node[main node] (uk) at (7,0.5) [fill,label=below:\tiny{$u_{\ell+k}$}] {};
\node[main node] (u-k) at (7,-0.5) [fill,label=below:\tiny{$u_{\ell-k}$}] {};
\node[main node] (k) at (8,0.5) [label=below:\tiny{$v_{\ell+k}$}] {};
\node[main node] (l) at (8,1.5) [label=above:\tiny{$v_{\ell}$}] {};
\node[main node] (-k) at (8,-0.5) [label=below:\tiny{$v_{\ell-k}$}] {};
\draw (0)--(1)--(2)--(3)(1)--(v1)(2)--(v2)
(v1)--(1-k)--(u1-k)--(u-k)--(-k)(v1)--(1+k)--(u1+k)--(uk)--(k)--(l)
(v2)--(2-k)--(u2-k)(v2)--(2+k)--(u2+k)
(u1+k)--(u2+k)(u1-k)..controls (5,0.5) and (5,1.5)..(u2-k);
\end{tikzpicture}
\caption{$u_{\ell},u_{\ell+1},u_{\ell+2},u_{\ell+3}\in C$}\label{fig:uell}
\end{figure}

\medskip
\textsf{Step 2:} Show that $C\cap U=\{u_{4i+j}\mid i\in\mathbb{Z}_n\}$ for some $j\in\{0,1,2,3\}$.

\medskip
Assume, for contradiction, that $C\cap U$ does not have this form. Since  $|C\cap U|=\frac{n}{4}$, it follows that $U\setminus C$ contains four consecutive vertices. In other words, there exists $\ell\in\mathbb{Z}_n$ such that $u_{\ell},u_{\ell+1},u_{\ell+2},u_{\ell+3}\notin C$. This leads to  $v_{\ell+k},v_{\ell-k},v_{\ell}\notin C$, as shown in F{\scriptsize IGURE} \ref{fig:uell} where black points represent vertices in $C$, and white  points represent vertices not in $C$. (F{\scriptsize IGURE} \ref{fig:uell} is drawn based on the facts that every black point has no black neighbors, while every white point has exactly one black neighbor.) Since $u_{\ell},v_{\ell+k},v_{\ell-k}\notin C$, no vertex in $C$ is adjacent to $v_{\ell}$. We conclude that $v_{\ell}$ is neither in $C$ nor adjacent to any vertex in $C$, contradicting the assumption that $C$ is a perfect code.

\medskip
\textsf{Step 3:} Show that $C=\{u_{4i+j},v_{4i+j+2}\mid i\in\mathbb{Z}_n\}$ for some $j\in\{0,1,2,3\}$.

\medskip
From Step 2, we have $C\cap U=\{u_{4i+j}\mid i\in\mathbb{Z}_n\}$ for some $j\in\{0,1,2,3\}$. Thus
 \begin{equation*}
u_{4i+j+1},u_{4i+j+2},u_{4i+j+3}\notin C
\end{equation*}
for every $i\in\mathbb{Z}_n$. Now consider the vertex $u_{4i+j+2}$, which is not in $C$ and has three neighbors $u_{4i+j+1},u_{4i+j+3}$ and $v_{4i+j+2}$. Since $u_{4i+j+1},u_{4i+j+3}\notin C$, the only vertex that can be the unique neighbor of $u_{4i+j+2}$ in $C$ is $v_{4i+j+2}$. This implies
 \begin{equation*}
 \{v_{4i+j+2}\mid i\in\mathbb{Z}_n\}\subseteq C\cap V.
\end{equation*}
From Step 1, we get $|C\cap V|=\frac{n}{4}$. Observing
$|\{v_{4i+j+2}\mid i\in\mathbb{Z}_n\}|=\frac{n}{4}$,
it follows that
\begin{equation*}
C\cap V=\{v_{4i+j+2}\mid i\in\mathbb{Z}_n\}.
\end{equation*}
Combining this with the expression for $C\cap U$, we have
\begin{equation*}
C=\{u_{4i+j},v_{4i+j+2}\mid i\in\mathbb{Z}_n\}.
\end{equation*}

\medskip
\textsf{Step 4:} Show that $k\equiv1\pmod2$.

\medskip
From Step 3, there is $j\in\{0,1,2,3\}$ such that
 \begin{equation*}
 C=\{u_{4i+j},v_{4i+j+2}\mid i\in\mathbb{Z}_n\}.
\end{equation*}
Since $v_{4i+j+2+k}$ and $v_{4i+j+2-k}$ are neighbors of $v_{4i+j+2}$, they are not adjacent to any vertex in $C\cap U$. It follows that both $u_{4i+j+2+k}$ and $u_{4i+j+2-k}$ are not in $C$. Therefore $2\pm k\not\equiv0 \pmod4$, that is, $k\equiv1\pmod2$.
\end{proof}

\section{Proof of Theorem \ref{tpcode}}
For a graph $\Gamma$ and two nonnegative integers $a$ and $b$, a subset $C$ of $V(\Gamma)$ is called an $(a,b)$-regular set if $C$ induces an $a$-regular subgraph and every vertex in $V(\Gamma)\setminus C$ is adjacent to exactly $b$ vertices in $C$ \cite{C2019}.
\label{sec:tpc}
\begin{lemma}
\label{para}
Let $C$ be a $(1,b)$-regular set of $\mathrm{GP}(n,k)$, where $b=1,2~\mbox{or}~3$. Then
\begin{equation*}
|E(C)\cap E(U)|=|E(C)\cap E(V)|~\mbox{and}~2|E(C)\cap E(U)|+|E(C)\cap E(U,V)|=\frac{bn}{2+b}.
\end{equation*}
\end{lemma}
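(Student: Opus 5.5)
We will prove the lemma by double counting edges between $C$ and the two halves of the vertex set, exactly as in Step 1 of the proof of Theorem \ref{pcode}. Set $s=|C\cap U|$, $t=|C\cap V|$, $x=|E(C)\cap E(U)|$, $y=|E(C)\cap E(V)|$ and $z=|E(C)\cap E(U,V)|$. Since $C$ induces a $1$-regular subgraph, every vertex of $C$ lies on exactly one edge of $E(C)$. A vertex of $C\cap U$ lies on either an edge of $E(C)\cap E(U)$ or an edge of $E(C)\cap E(U,V)$, and each edge of $E(C)\cap E(U)$ covers two vertices of $C\cap U$. Hence $s=2x+z$, and likewise $t=2y+z$.

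Next we count $E(C,U\setminus C)$ in two ways. Each vertex of $U\setminus C$ has exactly $b$ neighbours in $C$, so $|E(C,U\setminus C)|=b(n-s)$. On the other hand, a vertex of $C\cap U$ has two $U$-neighbours and one $V$-neighbour, one of which is its partner in $C$. If the partner lies in $U$, the vertex has exactly one neighbour in $U\setminus C$; otherwise it has two. A vertex of $C\cap V$ has exactly one $U$-neighbour, and that neighbour lies in $U\setminus C$ unless it is the partner. Summing these contributions gives
\begin{equation*}
|E(C,U\setminus C)|=2x\cdot 1+z\cdot 2+(t-z)=2x+2z+t-z=s+t,
\end{equation*}
using $s=2x+z$. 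Thus $b(n-s)=s+t$. The symmetric count over $V\setminus C$ uses the fact that every vertex of $V$ has two $V$-neighbours and one $U$-neighbour, just as in $U$, so it yields $b(n-t)=s+t$.

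Subtracting the two equations gives $bs=bt$, and since $b\geq1$ we get $s=t$. Then $2x+z=2y+z$ forces $x=y$, which is the first claim. Substituting $t=s$ into $b(n-s)=2s$ gives $s=\frac{bn}{2+b}$, and hence $2x+z=s=\frac{bn}{2+b}$, which is the second claim.

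The only step needing care is the local count of $U\setminus C$ neighbours of a vertex in $C$, distinguishing whether its unique $C$-neighbour is in $U$ or in $V$. The rest is linear algebra. The argument never uses the specific value of $b$ beyond $b\neq 0$, and it does not matter whether $V$ induces a single cycle or several, because only the degrees within $U$ and within $V$ enter the count.
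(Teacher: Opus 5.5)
Your proposal is correct and follows the paper's proof. Both double count $E(C,U\setminus C)$ and $E(C,V\setminus C)$ to get $b|U\setminus C|=|C|=b|V\setminus C|$, then solve the resulting linear equations. The differences are cosmetic. You count contributions per vertex of $C$ and deduce $|C\cap U|=|C\cap V|$ before $x=y$. The paper counts two edges into $U\setminus C$ per edge of $E(C)$ and subtracts the two equations written directly in the edge counts.
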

\begin{proof}
Since $C$ is a $(1,b)$-regular set of $\mathrm{GP}(n,k)$, the subgraph induced by $C$ is $1$-regular and every vertex in $(U\setminus C)\cup (V\setminus C)$ has exactly $b$ neighbors in $C$. Obviously, every edge in $E(C)$ is incident to exactly $2$ vertices in $U\setminus C$. By considering the edges between
$U\setminus C$ and $C$, we get $b|U\setminus C|=|C|$. Set $|E(C)\cap E(U)|=r$, $|E(C)\cap E(U,V)|=s$ and $|E(C)\cap E(V)|=t$. Then $|C|=2(r+s+t)$ and $|C\cap U|=2r+s$. Since
\begin{equation*}
n=|U|=|C\cap U|+|U\setminus C|,
\end{equation*}
we have
\begin{equation*}
n=2r+s+\frac{2}{b}(r+s+t).
\end{equation*}
Multiplying through by $b$ yields
\begin{equation}\label{|U|}
bn=2br+bs+2(r+s+t)=2(b+1)r+(b+2)s+2t.
\end{equation}
By a symmetric argument considering the edges between
$V\setminus C$ and $C$, we have
\begin{equation*}
b|V\setminus C|=|C|,~~|C\cap V|=2t+s
\end{equation*}
and therefore
\begin{equation}\label{|V|}
bn=2(b+1)t+(b+2)s+2r.
\end{equation}
Subtracting (\ref{|V|}) from (\ref{|U|}) gives
\begin{equation*}
0=2(b+1)(r-t)+2(t-r)=2b(r-t).
\end{equation*}
Since $b\ne 0$, we obtain $r=t$. Substituting $r=t$ into (\ref{|V|}) yields $2r+s=\frac{bn}{2+b}$.
\end{proof}
Observing that $(1,1)$-regular sets and total perfect codes are identical concepts, we obtain the following corollary from Lemma \ref{para}.
\begin{corollary}
\label{parat}
Let $C$ be a total perfect code in $\mathrm{GP}(n,k)$. Then $n\equiv0\pmod 3$,
\begin{equation*}
|E(C)\cap E(U)|=|E(C)\cap E(V)|~\mbox{and}~2|E(C)\cap E(U)|+|E(C)\cap E(U,V)|=\frac{n}{3}.
\end{equation*}
\end{corollary}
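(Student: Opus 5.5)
The plan is to specialize Lemma~\ref{para} to $b=1$, using the observation just made that a total perfect code is exactly a $(1,1)$-regular set. First I would check that the hypotheses of Lemma~\ref{para} hold. A total perfect code $C$ has every vertex of $\mathrm{GP}(n,k)$ adjacent to exactly one vertex of $C$. In particular, every vertex of $C$ has exactly one neighbour in $C$, so $C$ induces a $1$-regular subgraph. Every vertex outside $C$ also has exactly one neighbour in $C$. Hence $C$ is $(1,1)$-regular, and Lemma~\ref{para} applies with $b=1$.

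With $b=1$, Lemma~\ref{para} gives directly
\begin{equation*}
|E(C)\cap E(U)|=|E(C)\cap E(V)|\quad\text{and}\quad 2|E(C)\cap E(U)|+|E(C)\cap E(U,V)|=\frac{n}{1+2}=\frac{n}{3}.
\end{equation*}
These are exactly the two displayed identities in the corollary.

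It remains to deduce $n\equiv 0\pmod 3$. The left-hand side $2|E(C)\cap E(U)|+|E(C)\cap E(U,V)|$ is a nonnegative integer, being built from edge counts. Therefore $n/3$ is an integer, i.e.\ $3\mid n$. In the notation of the proof of Lemma~\ref{para}, the same integer is $2r+s=|C\cap U|$, which confirms the integrality.

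There is no real obstacle here. The only point requiring care is confirming that the definition of a total perfect code yields the $1$-regularity of the induced subgraph, so that Lemma~\ref{para} is applicable.
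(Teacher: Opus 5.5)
Your proposal is correct and matches the paper's argument. The paper likewise notes that total perfect codes are exactly the $(1,1)$-regular sets and reads off the corollary from Lemma~\ref{para} with $b=1$; the conclusion $3\mid n$ follows, implicitly in the paper and explicitly in your write-up, from the integrality of $2|E(C)\cap E(U)|+|E(C)\cap E(U,V)|=|C\cap U|$.
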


\medskip
Let $C$ be a total perfect code in $\mathrm{GP}(n,k)$, and designate its vertices as black, with the remainder being white. In this coloring, the defining property of $C$ ensures every vertex has exactly one black neighbor and two white neighbors. It follows immediately that all vertices at a distance of $1$ or $2$ from any black vertex must be white. This allows us to depict a determined local structure for any two adjacent black vertices. For example, if $u_0,u_1\in C$, then they have a determinable locality as shown in F{\scriptsize IGURE} \ref{fig:u0}.

\medskip
A key step in proving Theorem \ref{tpcode} is established by the following lemma.
\begin{lemma}
\label{nuu}
Let $C$ be a total perfect code in $\mathrm{GP}(n,k)$. If $E(C)\cap E(U)\neq\emptyset$, then $E(C)\cap E(U,V)=\emptyset$.
\end{lemma}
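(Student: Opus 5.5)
The plan is a local propagation (colouring) argument in the spirit of Step~2 of the proof of Theorem~\ref{pcode}. Colour the vertices of $C$ black and the others white. Recall that every vertex has exactly one black neighbour, and that every vertex at distance $1$ or $2$ from a black vertex, other than its black partner, is white. Suppose, for a contradiction, that $C$ contains both an edge of $E(U)$ and an edge of $E(U,V)$. Walk around the $n$-cycle induced by $U$. The black vertices of $U$ then form \emph{blocks} of two kinds: pairs $\{u_i,u_{i+1}\}$ (edges in $E(C)\cap E(U)$) and singletons $\{u_i\}$ with $v_i$ black (edges in $E(C)\cap E(U,V)$). Since both kinds occur, somewhere a pair block is immediately followed along the cycle by a singleton block. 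Using the reflection symmetry $i\mapsto -i$ of $\mathrm{GP}(n,k)$, and relabelling, we may assume that $u_0,u_1$ are black and that the next black vertex of $U$ after $u_1$ is $u_m$, with $v_m$ black.

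The first step is to pin down the gap length $m-2$. A white $u_i$ adjacent to two black blocks would have two black neighbours, so the gap has length at least $2$. For $2\le i\le m-1$, the white vertex $u_i$ must obtain its black neighbour from $u_{i\pm1}$ or $v_i$. This forces $v_i$ to be white for $i=2$ and $i=m-1$, and $v_i$ to be black for every interior $i$ with $3\le i\le m-2$. Moreover, two consecutive interior $v_i$ cannot both be black: their $u$'s are white, so each would need its black partner in $V$, and the resulting black $v$'s would be at distance $2$ through a white $u$, which is fine. I expect to rule out long gaps by the observation that an interior black $v_i$ is at distance $2$ from $v_{i+1}$ via $u_i,u_{i+1}$; the clean conclusion should be $m\in\{4,5\}$.

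The main work is then the two cases $m=4$ and $m=5$, each handled by drawing the forced local picture, exactly as in F{\scriptsize IGURE}~\ref{fig:uell}. Take $m=4$ as an example: $u_0,u_1,u_4,v_4$ are black and $u_2,u_3,v_0,\dots,v_3$ are white. Then $v_0$ and $v_1$ must each have a black neighbour among $v_{\pm k}$ and $v_{1\pm k}$ respectively. Meanwhile the black pair $u_4v_4$ forces $v_{4\pm k}$, $u_{4\pm k}$ and $u_{4\pm k\pm1}$ to be white. Similarly, the black pair $u_0u_1$ forces $u_{-1},u_2,v_0,v_1$ to be white, and hence constrains $v_{\pm k}$ and $v_{1\pm k}$ through their $u$-neighbours. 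Following these forced colours one or two more layers out, I aim to produce a vertex with either no black neighbour or two black neighbours, just as Step~2 produced the uncovered vertex $v_\ell$. The case $m=5$ is treated the same way, with the extra black vertex $v_3$ supplying additional constraints.

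The hardest part is that the positions $v_{i\pm k}$ interact with the window $\{0,\dots,5\}$ differently depending on $k$. Some of the would-be contradictory vertices may coincide when $k$ is small, for instance $k\in\{\pm1,\pm2,\pm3\}$, or when $2k\equiv$ small $\pmod n$. I would first carry out the generic argument, assuming that all the indices appearing are distinct. Afterwards I would dispose of the coincidence cases separately, checking directly that each one already creates a black vertex with two black neighbours. If the purely local argument stalls in some subcase, the fallback is to combine it with the counting identity $2|E(C)\cap E(U)|+|E(C)\cap E(U,V)|=\frac n3$ of Corollary~\ref{parat}.
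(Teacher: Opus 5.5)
Your reduction to $m\in\{4,5\}$ is where the argument breaks. Your justification for ``two consecutive interior $v_i$ cannot both be black'' ends with ``which is fine'', so it finds no contradiction. The observation meant to support the claim is also false: $v_iu_iu_{i+1}v_{i+1}$ is a path of length $3$, not $2$, so nothing forbids black $v_i,v_{i+1}$ above white $u_i,u_{i+1}$. In fact the genuine codes $C'_j$ of Lemma~\ref{etpc2} contain exactly this pattern. After the pair $u_j,u_{j+1}$ comes the white run $u_{j+2},\dots,u_{j+5}$, with $v_{j+3},v_{j+4}$ black.

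So you must also exclude $m\ge 6$, and $m=6$ is the real content of the lemma. Take $u_0,u_1,v_3,v_4,u_6,v_6$ black and $u_2,\dots,u_5,u_7$ white. This configuration agrees with $C'_0$ on $u_0,\dots,u_6$ and $v_0,\dots,v_5$, so no short local check around your window refutes it. Ruling it out amounts to proving that $u_0,u_1\in C$ forces $u_7\in C$. In the paper's chains, the vertex that clashes with a black $v_6$ is $u_{6-k}$ (resp.\ $u_{6+k}$), and it is forced black only after many steps. You also do not address $m\ge 7$, where three or more consecutive black $v_i$ sit over white $u_i$. Your cases $m=4,5$ are only announced (``I aim to produce''); no contradiction is actually exhibited.

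The missing idea is the paper's key implication $u_0,u_1\in C\Rightarrow u_6,u_7\in C$. It is proved as follows:
\begin{enumerate}[(a)]
\item normalise $v_{2+k}\in C$, using $\mathrm{GP}(n,k)=\mathrm{GP}(n,-k)$;
\item split according to whether the black neighbour of $v_{2+k}$ is $u_{2+k}$ or $v_{2+2k}$;
\item in each case, chase forced colours until $v_3,v_4$ and then $u_6,u_7$ are black.
\end{enumerate}
Once you have this implication, your block framing closes the proof with no case analysis on $m$ and no counting. Since $u_2,\dots,u_5$ are then white, the block following any pair is again a pair, so a pair is never followed by a singleton. The paper instead iterates the implication to get $|E(C)\cap E(U)|\ge n/6$ and then invokes Lemma~\ref{para}.

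Finally, your worry about index coincidences for small $k$ is misplaced. Each deduction has one of two forms: ``this vertex already has its black neighbour, so its other neighbours are white'', or ``two of its neighbours are white, so the third is black''. Each is a statement about specific vertices of the cubic graph $\mathrm{GP}(n,k)$ and stays valid even when some indices coincide. So the propagation runs uniformly in $k$.
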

\begin{proof}
Without loss of generality, assume $u_0u_1\in E(C)\cap E(U)$, that is $u_0,u_1\in C$. Then $u_0$ and $u_1$ have a determinable locality as shown in F{\scriptsize IGURE} \ref{fig:u0}.
 In particular, $u_2,v_2\notin C$. Therefore, exactly one of the two vertices $v_{2+k}$ and $v_{2-k}$ is in $C$. Observing  $\mathrm{GP}(n,k)=\mathrm{GP}(n,-k)$,  we may assume, again without loss of generality, that $v_{2+k}\in C$. Since $v_2\notin C$, it follows that exactly one of the two vertices $u_{2+k}$ and $v_{2+2k}$ is in $C$. We will first prove that $u_6,u_7\in C$ in both cases: whether $u_{2+k}\in C$ or $v_{2+2k}\in C$.

 \medskip
 \textsf{Case 1:} $u_{2+k}\in C$.

 \medskip
In this case,  $u_{2+k}$ and $v_{2+k}$ have a determinable locality as shown in F{\scriptsize IGURE} \ref{fig:uv2+k}. From the data on white vertices  presented in F{\scriptsize IGURE} \ref{fig:u0} and F{\scriptsize IGURE} \ref{fig:uv2+k}, we deduce that $v_4, v_{4-k}\in C$. The derivation process is outlined below:
\begin{equation*}
\left.
  \begin{array}{rr}
\left.
  \begin{array}{rr}
   v_0,u_{k}\notin C\Longrightarrow v_{2k}\in C\Longrightarrow u_{1+2k}\notin C\\
v_{2+2k}\notin C
  \end{array}
\right\}\Longrightarrow  u_{3+2k}\in C\Longrightarrow  v_{4+2k}\notin C\\
u_{4+k}\notin C
\end{array}\right\}\Longrightarrow  v_{4}\in C
\end{equation*}
and
\begin{align*}
 \left.
  \begin{array}{rr}
\left.
  \begin{array}{rr}
 v_{1},u_{1+k}\notin C\Longrightarrow v_{1+2k}\in C\Longrightarrow u_{1+3k}\notin C\\
v_{2+3k}\notin C
  \end{array}
\right\}\Longrightarrow u_{3+3k}\in C\Longrightarrow v_{3+2k}\notin C\\
u_{3+k}\notin C
  \end{array}
\right\}& \\
\left.
  \begin{array}{rr}
  \Longrightarrow v_{3}\in C\Longrightarrow u_{4}\notin C\\
u_{2+2k},v_{3+2k}\notin C\Longrightarrow u_{4+2k}\in C
\Longrightarrow v_{4+k}\notin C
  \end{array}
\right\}\Longrightarrow v_{4-k}\in C.
&
\end{align*}
Thus F{\scriptsize IGURE} \ref{fig:v4-k} shows a determinable locality for $v_{4}$ and $v_{4-k}$.

\medskip
Now, based the data on white vertices  presented in F{\scriptsize IGURE} \ref{fig:u0}, F{\scriptsize IGURE} \ref{fig:uv2+k} and F{\scriptsize IGURE} \ref{fig:v4-k}, we deduce that $u_6, u_{7}\in C$ as follows:
\begin{equation*}
\left.
  \begin{array}{rr}
   u_{3+k},v_{4+k}\notin C\Longrightarrow u_{5+k}\in C\Longrightarrow v_{5}\notin C\\
u_{4}\notin C
  \end{array}
\right\}\Longrightarrow  u_{6}\in C
\end{equation*}
and
\begin{align*}
 \left.
  \begin{array}{rr}
\left.
  \begin{array}{rr}
   u_{3},v_{3+k}\notin C\Longrightarrow v_{3-k}\in C\Longrightarrow u_{3-2k}\notin C\\
v_{4-2k}\notin C
  \end{array}
\right\}\Longrightarrow  u_{5-2k}\in C\Longrightarrow  v_{5-k}\notin C
\\
u_{4-k}\notin C
  \end{array}
\right\}& \\
\left.
  \begin{array}{rr}
   \Longrightarrow u_{6-k}\in C\Longrightarrow v_{6}\notin C \\
    u_{5}\notin C
  \end{array}
\right\}\Longrightarrow u_{7}\in C.
&
\end{align*}
\begin{figure}[t]
  \centering
\begin{tikzpicture}
\tikzstyle{main node}=[draw,shape=circle,label distance=-0.5mm,inner sep=1pt];
\node[main node] (-4) at (-4,2) [fill,label=left:\tiny{$u_0$}] {};
\node[main node] (4) at (4,2) [fill,label=right:\tiny{$u_1$}] {};
\node[main node] (-2) at (-2,1) [label=right:\tiny{$u_{-1}$}] {};
\node[main node] (-6) at (-6,1) [label=left:\tiny{$v_0$}] {};
\node[main node] (2) at (2,1) [label=left:\tiny{$u_2$}] {};
\node[main node] (6) at (6,1) [label=right:\tiny{$v_1$}] {};
\node[main node] (-1) at (-1,0) [label=right:\tiny{$u_{-2}$}] {};
\node[main node] (-3) at (-3,0) [label=left:\tiny{$v_{-1}$}] {};
\node[main node] (-5) at (-5,0) [label=right:\tiny{$v_{-k}$}] {};
\node[main node] (-7) at (-7,0) [label=left:\tiny{$v_k$}] {};
\node[main node] (1) at (1,0) [label=left:\tiny{$u_3$}] {};
\node[main node] (3) at (3,0) [label=right:\tiny{$v_2$}] {};
\node[main node] (5) at (5,0) [label=left:\tiny{$v_{1-k}$}] {};
\node[main node] (7) at (7,0) [label=right:\tiny{$v_{1+k}$}] {};
\draw (-4)--(4)
(-4)--(-2)(-4)--(-6)(-2)--(-1)(-2)--(-3)(-6)--(-5)(-6)--(-7)
(4)--(2)(4)--(6)(2)--(1)(2)--(3)(6)--(5)(6)--(7);
\end{tikzpicture}
\caption{$u_0,u_1\in C$}\label{fig:u0}

\bigskip

\begin{tikzpicture}
\tikzstyle{main node}=[draw,shape=circle,label distance=-0.5mm,inner sep=1pt];
\node[main node] (-4) at (-4,2) [fill,label=left:\tiny{$u_{2+k}$}] {};
\node[main node] (4) at (4,2) [fill,label=right:\tiny{$v_{2+k}$}] {};
\node[main node] (-2) at (-2,1) [label=right:\tiny{$u_{1+k}$}] {};
\node[main node] (-6) at (-6,1) [label=left:\tiny{$u_{3+k}$}] {};
\node[main node] (2) at (2,1) [label=left:\tiny{$v_2$}] {};
\node[main node] (6) at (6,1) [label=right:\tiny{$v_{2+2k}$}] {};
\node[main node] (-1) at (-1,0) [label=right:\tiny{$u_{k}$}] {};
\node[main node] (-3) at (-3,0) [label=left:\tiny{$v_{1+k}$}] {};
\node[main node] (-5) at (-5,0) [label=right:\tiny{$v_{3+k}$}] {};
\node[main node] (-7) at (-7,0) [label=left:\tiny{$u_{4+k}$}] {};
\node[main node] (1) at (1,0) [label=left:\tiny{$u_2$}] {};
\node[main node] (3) at (3,0) [label=right:\tiny{$v_{2-k}$}] {};
\node[main node] (5) at (5,0) [label=left:\tiny{$u_{2+2k}$}] {};
\node[main node] (7) at (7,0) [label=right:\tiny{$v_{2+3k}$}] {};
\draw (-4)--(4)
(-4)--(-2)(-4)--(-6)(-2)--(-1)(-2)--(-3)(-6)--(-5)(-6)--(-7)
(4)--(2)(4)--(6)(2)--(1)(2)--(3)(6)--(5)(6)--(7);
\end{tikzpicture}
\caption{$u_{2+k},v_{2+k}\in C$}\label{fig:uv2+k}
\end{figure}

\begin{figure}[t]
  \centering
\begin{tikzpicture}
\tikzstyle{main node}=[draw,shape=circle,label distance=-0.5mm,inner sep=1pt];
\node[main node] (-4) at (-4,2) [fill,label=left:\tiny{$v_{4-k}$}] {};
\node[main node] (4) at (4,2) [fill,label=right:\tiny{$v_4$}] {};
\node[main node] (-2) at (-2,1) [label=right:\tiny{$u_{4-k}$}] {};
\node[main node] (-6) at (-6,1) [label=left:\tiny{$v_{4-2k}$}] {};
\node[main node] (2) at (2,1) [label=left:\tiny{$u_4$}] {};
\node[main node] (6) at (6,1) [label=right:\tiny{$v_{4+k}$}] {};
\node[main node] (-1) at (-1,0) [label=right:\tiny{$u_{3-k}$}] {};
\node[main node] (-3) at (-3,0) [label=left:\tiny{$u_{5-k}$}] {};
\node[main node] (-5) at (-5,0) [label=right:\tiny{$u_{4-2k}$}] {};
\node[main node] (-7) at (-7,0) [label=left:\tiny{$v_{4-3k}$}] {};
\node[main node] (1) at (1,0) [label=left:\tiny{$u_3$}] {};
\node[main node] (3) at (3,0) [label=right:\tiny{$u_5$}] {};
\node[main node] (5) at (5,0) [label=left:\tiny{$u_{4+k}$}] {};
\node[main node] (7) at (7,0) [label=right:\tiny{$v_{4+2k}$}] {};
\draw (-4)--(4)
(-4)--(-2)(-4)--(-6)(-2)--(-1)(-2)--(-3)(-6)--(-5)(-6)--(-7)
(4)--(2)(4)--(6)(2)--(1)(2)--(3)(6)--(5)(6)--(7);
\end{tikzpicture}
\caption{$v_{4-k},v_4\in C$}\label{fig:v4-k}
\end{figure}
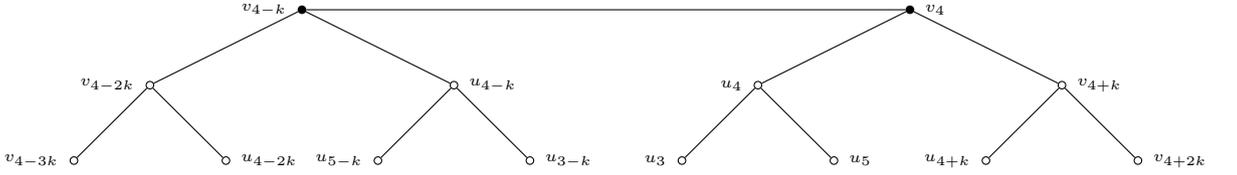

\medskip
\textsf{Case 2:} $v_{2+2k}\in C$.

\medskip
In this case, $v_{2+k}$ and $v_{2+2k}$ have a determinable locality as depicted  in F{\scriptsize IGURE} \ref{fig:v2+k}.
Based on the data from white vertices  presented in F{\scriptsize IGURE} \ref{fig:u0} and F{\scriptsize IGURE} \ref{fig:v2+k},  we derive the following:
\begin{equation*}
\begin{split}
\left.
\begin{array}{rr}
  u_{1+k},v_{k}\notin C\Longrightarrow u_{k-1}\in C\Longrightarrow v_{k-2}\notin C \\
 u_{-2}\notin C
  \end{array}
\right\} \\
\left.
\begin{array}{rr}
\Longrightarrow v_{-2-k}\in C\Longrightarrow u_{-1-k}\notin C\\
 v_{-k}\notin C
  \end{array}
\right\}\Longrightarrow u_{1-k}\in C
\end{split}
\end{equation*}
and
\begin{equation*}
\begin{split}
\left.
  \begin{array}{rr}
   u_{2+k}, v_{1+k}\notin C\Longrightarrow u_{k}\in C\Longrightarrow v_{k-1}\notin C \\
 u_{-1}\notin C
  \end{array}
\right\}\\
 \left.
\begin{array}{rr}
  \Longrightarrow v_{-1-k}\in C\Longrightarrow u_{-k}\notin C \\
 v_{1-k}\notin C
  \end{array}
\right\}\Longrightarrow u_{2-k}\in C.
\end{split}
\end{equation*}
\begin{figure}[t]
  \centering
\begin{tikzpicture}
\tikzstyle{main node}=[draw,shape=circle,label distance=-0.5mm,inner sep=1pt];
\node[main node] (-4) at (-4,2) [fill,label=left:\tiny{$v_{2+k}$}] {};
\node[main node] (4) at (4,2) [fill,label=right:\tiny{$v_{2+2k}$}] {};
\node[main node] (-2) at (-2,1) [label=right:\tiny{$v_{2}$}] {};
\node[main node] (-6) at (-6,1) [label=left:\tiny{$u_{2+k}$}] {};
\node[main node] (2) at (2,1) [label=left:\tiny{$u_{2+2k}$}] {};
\node[main node] (6) at (6,1) [label=right:\tiny{$v_{2+3k}$}] {};
\node[main node] (-1) at (-1,0) [label=right:\tiny{$v_{2-k}$}] {};
\node[main node] (-3) at (-3,0) [label=left:\tiny{$u_{2}$}] {};
\node[main node] (-5) at (-5,0) [label=right:\tiny{$u_{1+k}$}] {};
\node[main node] (-7) at (-7,0) [label=left:\tiny{$u_{3+k}$}] {};
\node[main node] (1) at (1,0) [label=left:\tiny{$u_{1+2k}$}] {};
\node[main node] (3) at (3,0) [label=right:\tiny{$u_{3+2k}$}] {};
\node[main node] (5) at (5,0) [label=left:\tiny{$u_{2+3k}$}] {};
\node[main node] (7) at (7,0) [label=right:\tiny{$v_{2+4k}$}] {};
\draw (-4)--(4)
(-4)--(-2)(-4)--(-6)(-2)--(-1)(-2)--(-3)(-6)--(-5)(-6)--(-7)
(4)--(2)(4)--(6)(2)--(1)(2)--(3)(6)--(5)(6)--(7);
\end{tikzpicture}
\caption{$v_{2+k},v_{2+2k}\in C$}\label{fig:v2+k}

\bigskip

\begin{tikzpicture}
\tikzstyle{main node}=[draw,shape=circle,label distance=-0.5mm,inner sep=1pt];
\node[main node] (-4) at (-4,2) [fill,label=left:\tiny{$u_{1-k}$}] {};
\node[main node] (4) at (4,2) [fill,label=right:\tiny{$u_{2-k}$}] {};
\node[main node] (-2) at (-2,1) [label=right:\tiny{$u_{-k}$}] {};
\node[main node] (-6) at (-6,1) [label=left:\tiny{$v_{1-k}$}] {};
\node[main node] (2) at (2,1) [label=left:\tiny{$u_{3-k}$}] {};
\node[main node] (6) at (6,1) [label=right:\tiny{$v_{2-k}$}] {};
\node[main node] (-1) at (-1,0) [label=left:\tiny{$u_{-1-k}$}] {};
\node[main node] (-3) at (-3,0) [label=left:\tiny{$v_{-k}$}] {};
\node[main node] (-5) at (-5,0) [label=left:\tiny{$v_{1-2k}$}] {};
\node[main node] (-7) at (-7,0) [label=left:\tiny{$v_{1}$}] {};
\node[main node] (1) at (1,0) [label=right:\tiny{$u_{4-k}$}] {};
\node[main node] (3) at (3,0) [label=right:\tiny{$v_{3-k}$}] {};
\node[main node] (5) at (5,0) [label=right:\tiny{$v_{2-2k}$}] {};
\node[main node] (7) at (7,0) [label=right:\tiny{$v_{2}$}] {};
\draw (-4)--(4)
(-4)--(-2)(-4)--(-6)(-2)--(-1)(-2)--(-3)(-6)--(-5)(-6)--(-7)
(4)--(2)(4)--(6)(2)--(1)(2)--(3)(6)--(5)(6)--(7);
\end{tikzpicture}
\caption{$u_{k-1},u_k\in C$}\label{fig:u1-k}
\end{figure}

\medskip
Thus $u_{1-k}$ and $u_{2-k}$ have a determinable locality as depicted in F{\scriptsize IGURE} \ref{fig:u1-k}.
From the data on white vertices  presented in F{\scriptsize IGURE} \ref{fig:u0}, F{\scriptsize IGURE} \ref{fig:v2+k} and F{\scriptsize IGURE} \ref{fig:u1-k}, we further derive:
\begin{equation*}
 \left.
\begin{array}{rr}
  u_{-1-k},v_{-1}\notin C\Longrightarrow v_{-1-2k}\in C\Longrightarrow u_{-2k}\notin C \\
 v_{1-2k}\notin C
  \end{array}
\right\}\Longrightarrow u_{2-2k}\in C\Longrightarrow v_{3-2k}\notin C
\end{equation*}
\begin{equation*}
 \left.
\begin{array}{rr}
  u_{-k},v_{0}\notin C\Longrightarrow v_{-2k}\in C\Longrightarrow u_{1-2k}\notin C \\
 v_{2-2k}\notin C
  \end{array}
\right\}\Longrightarrow u_{3-2k}\in C\Longrightarrow v_{4-2k}\notin C.
\end{equation*}

\medskip
Additionally, we have
\begin{equation*}
\begin{split}
\left.
  \begin{array}{rr}
    u_{3-k},v_{3-2k}\notin C\Longrightarrow v_{3}\in C\Longrightarrow v_{3+2k}\notin C \\
 u_{2+2k}\notin C
  \end{array}
\right\}\Longrightarrow u_{4+2k}\in C\\
\left.
  \begin{array}{rr}
\Longrightarrow v_{4+k}\notin C\\
v_{3}\in C\Longrightarrow u_{4}\notin C
\end{array}\right\}\Longrightarrow  v_{4-k}\in C
\end{split}
\end{equation*}
and
\begin{equation*}
 \left.
   \begin{array}{rr}
    u_{4-k}, v_{4-2k}\notin C\Longrightarrow v_{4}\in C\Longrightarrow v_{4+2k}\notin C \\
u_{3+2k}\notin C
   \end{array}
 \right\}
\Longrightarrow u_{5+2k}\in C\Longrightarrow v_{5+k}\notin C.
\end{equation*}
Since $v_{4-k},v_{4}\in C$, we have that $v_{4}$ and $v_{4-k}$ admit a determinable locality as depicted in F{\scriptsize IGURE} \ref{fig:v4-k}.
Based the data on white vertices  presented in F{\scriptsize IGURE} \ref{fig:v4-k} and F{\scriptsize IGURE} \ref{fig:v2+k}, we deduce that $u_6, u_{7}\in C$ as follows:
\begin{equation*}
\left.
  \begin{array}{rr}
   u_{3+k},v_{4+k}\notin C\Longrightarrow u_{5+k}\in C\Longrightarrow v_{5}\notin C\\
u_{4}\notin C
  \end{array}
\right\}\Longrightarrow  u_{6}\in C
\end{equation*}
and
\begin{equation*}
\left.
  \begin{array}{rr}
   u_{4+k},v_{5+k}\notin C\Longrightarrow u_{6+k}\in C\Longrightarrow v_{6}\notin C\\
u_{5}\notin C
  \end{array}
\right\}\Longrightarrow  u_{7}\in C.
\end{equation*}

\medskip
Having established that $u_{0},u_{1}\in C$ implies $u_{6},u_{7}\in C$,
we can apply the same logic again. Since $u_{6},u_{7}\in C$, it follows that $u_{12},u_{13}\in C$. Applying this rule iteratively, we conclude that $u_{6i},u_{6i+1}\in C$ for all $i\in \mathbb{Z}_n$. Thus $|E(C)\cap E(U)|\geq \frac{n}{6}$. By Lemma \ref{para}, we have
\begin{equation*}
2|E(C)\cap E(U)|+|E(C)\cap E(U,V)|=\frac{n}{3}.
\end{equation*}
Consequently, $|E(C)\cap E(U)|=\frac{n}{6}$ and $|E(C)\cap E(U,V)|=0$.
\end{proof}
\begin{proof}[Proof of Theorem \ref{tpcode}]
The sufficiency follows from Lemmas \ref{etpc1} and \ref{etpc2}. For the necessity, assume that $C$ is a total perfect code in $\mathrm{GP}(n,k)$. The proof proceeds by considering two cases depending on whether $E(C)\cap E(U)$ is empty.

\medskip
\textsf{Case 1:} $E(C)\cap E(U)=\emptyset$.

\medskip
By Corollary \ref{parat}, we have $n\equiv0\pmod 3$, $E(C)\cap E(V)=\emptyset$ and
\begin{equation*}
|E(C)\cap E(U,V)|=|E (C)|=\frac{n}{3}.
\end{equation*}
 This implies that $|C|=\frac{2n}{3}$ and
\begin{equation*}
u_{m}\in C\Longleftrightarrow v_{m}\in C,~\forall m\in \mathbb{Z}_n.
\end{equation*}

\medskip
Now suppose $u_{\ell},v_{\ell}\in C$. Then $v_{\ell+k},u_{\ell+2},u_{\ell+1}\notin C$. Note that $u_{\ell+2}\notin C$ implies $v_{\ell+2}\notin C$.  This together with $v_{\ell+2}\notin C$ leads to $u_{\ell+3}\in C$, and consequently $v_{\ell+3}\in C$. We have thus shown that $u_{\ell},v_{\ell}\in C$ implies $u_{\ell+3},v_{\ell+3}\in C$. By induction,  $u_{\ell+3i},v_{\ell+3i}\in C$ for all $i\in \mathbb{Z}_n$. Thus $C_j\subseteq C$ if $\ell\equiv j\pmod3$ where $j\in\{0,1,2\}$. Observing  that $|C_j|=\frac{2n}{3}=|C|$, we get $C=C_j$. Furthermore, since $v_{\ell+k}\notin C$, we get $k\not\equiv0\pmod3$.

\medskip

\textsf{Case 2:} $E(C)\cap E(U)\neq\emptyset$.

\medskip
By Lemma \ref{nuu}, we have $E(C)\cap E(U,V)=\emptyset$. This, together with Corollary \ref{parat}, implies that $n\equiv0\pmod{6}$ and
\begin{equation*}
|C\cap U|=|C\cap V|=\frac{n}{3}.
\end{equation*}

\medskip
From the proof of Lemma \ref{nuu}, if $u_{0},u_{1}\in C$, then $C\cap U=\{u_{6i},u_{6i+1}\mid i\in \mathbb{Z}_n\}$. By the same logic, for every $\ell\in \mathbb{Z}_n$, if $u_{\ell},u_{\ell+1}\in C$, then $C\cap U=\{u_{\ell+6i},u_{\ell+6i+1}\mid i\in \mathbb{Z}_n\}$. Consequently,
\begin{equation*}
C\cap U=\{u_{j+6i},u_{j+6i+1}\mid i\in \mathbb{Z}_n\}
\end{equation*}
 for some $j\in\{0,1,2,3,4,5\}$.

\medskip
Observe that neither $u_{j+6i+3}$ nor $u_{j+6i+4}$ is adjacent to any vertex in $C\cap U$, which implies
 \begin{equation*}
\{v_{j+6i+3},v_{j+6i+4}\mid i\in \mathbb{Z}_n\}\subseteq C\cap V.
\end{equation*}
Since $|\{v_{j+6i+3},v_{j+6i+4}\mid i\in \mathbb{Z}_n\}|\geq\frac{n}{3}$ and $|C\cap V|=|C\cap U|=\frac{n}{3}$, we get
 \begin{equation*}
 C\cap V=\{v_{j+6i+3},v_{j+6i+4}\mid i\in \mathbb{Z}_n\}.
\end{equation*}
Consequently, $C=C'_j$.
Furthermore, observing $E(C\cap U,C\cap V)=\emptyset$, the set $C\cap V$ must induce a $1$-regular subgraph. Thus, either $v_{j+3+k}\in C'_j$ or $v_{j+3-k}\in C'_j$. The former condition implies $k\equiv1\pmod6$ and the latter implies $k\equiv-1\pmod6$.
\end{proof}

 \medskip
\noindent\textbf{Acknowledgement.}

\medskip
The first author was supported by the Natural Science Foundation of China (No. 12401453), the China Postdoctoral Science Foundation (No. 2024M751251) and the Postdoctoral Fellowship Program of CPSF (No. GZC20240626).
The second author was supported by the Natural Science Foundation of Chongqing (No. CSTB 2025NSCQ-GPX1034).

\end{document}